\newtheorem{theorem}{Theorem}[section]
\newtheorem{corollary}{Corollary}[section]
\newtheorem{definition}{Definition}[section]
\newtheorem{lemma}[theorem]{Lemma}
\theoremstyle{remark}
\newtheorem*{remark}{Remark}
\title{Tamarkin-Tsygan Calculus and Chiral Poisson Cohomology}
\author{Emile Bouaziz}
\begin{document}\begin{abstract} We construct and study some vertex theoretic invariants associated to Poisson varieties, specialising in the conformal weight $0$ case to the familiar package of Poisson homology and cohomology. In order to do this conceptually we sketch a version of the \emph{calculus}, in the sense of \cite{TT}, adapted to the context of vertex algebras. We obtain the standard theorems of Poisson (co)homology in this \emph{chiral} context. This is part of a larger project related to promoting non-commutative geometric structures to chiral versions of such.\end{abstract} 
\maketitle

\section{Introduction} We construct in this note certain rather infinite dimensional invariants associated to Poisson varieties. They can be thought of as a sort of \emph{chiralization} of the Tamarkin-Tsygan calculus associated to a Poisson variety (which calculus computes the calculus associated to a $*$-quantization of a smooth commutative algebra $A$, cf. \cite{Ko}). In particular the invariants produced reduce to cohomology of the chiral de Rham when $\pi$ vanishes and Poisson (co)homology in conformal weight $0$. We also identify the fixed points of the homotopical $S^{1}$ action coming from $d^{ch}_{dR}$ and prove an analogue of a theorem of Brylinski (\cite{Bry}) showing that these invariants are trivial when $\pi$ is non-degenerate. We view this note as a part of a larger set of questions regarding when Tamarkin-Tsygan calculi can be chiralized, the case of matrix factorizations was treated in the paper \cite{Bo}. 

Associated to a smooth $\mathbb{C}$-variety we have the sheaf of Gerstenhaber algebras (henceforth $\mathcal{G}$-algebras, see the introduction of \cite{TT} for a definition), $\Theta_{M}$, of polyvector fields on $M$, endowed with the graded commutative wedge product, $\wedge$, and the Gerstenhaber bracket, $[\,,\,]$. This acts on the sheaf of forms $\Omega_{M}$ via the usual package consisting of contractions, $\iota_{v}$, and Lie derivatives, $\mathcal{L}_{v}$. The various compatibilities between these operations are summarized by saying that $\Theta_{M}$ acts on $\Omega_{M}$ as a $\mathcal{G}$-algebra. Further, $\Omega_{M}$ admits a differential, $d_{dR}$, and one has Cartan's relation $$\big[d_{dR},\,\iota\,\big]=\mathcal{L}.$$ These relations are summarised elegantly in the language of \cite{TT} by saying that the tuple $(\Theta_{M},\Omega_{M},d_{dR})$ forms a \emph{calculus} (a $\mathcal{G}$-algebra together with a module $M$ and a differential $\partial$ relating the contraction and Lie operations as above). 

 If $\pi$ is a Poisson form on $M$, which is to say an element of $\Theta_{M}$ of cohomological degree $2$ satisfying $[\pi,\pi]=0$, then one obtains a cohomological differential $[\pi,-]$ on $\Theta_{M}$ (cf. the work of Lichnerowicz in \cite{Lich}), and a corresponding homological differential $\mathcal{L}_{\pi}$ on $\Omega_{M}$ (cf. the work of Brylinski in \cite{Bry}). The hypercohomology of these complexes will be denoted $H^{*}_{\pi}(M)$ and $H^{\pi}_{*}(M)$, and referred to as \emph{Poisson cohomology} and \emph{Poisson homology} respectively.  Further, one has $\big[d_{dR},\,\mathcal{L}_{\pi}\,\big]=0$, whence one can form the $\mathbb{Z}/2$-graded totalization $(\Omega_{M}((u)),ud_{dR}+\mathcal{L}_{\pi})$ of the \emph{mixed complex} $(\Omega_{M},\mathcal{L}_{\pi},d_{dR})$, where $u$ is a variable of cohomological degree $-2$. The hypercohomology of this totalization will be denoted $H^{\pi}_{*}(M)^{S^{1}}$, (cf. \cite{PTVV}, section 1 for an explanation as to this notational choice). The construction above implies that the tuple $(H^{*}_{\pi}(M),H^{\pi}_{*}(M),d_{dR})$ forms a calculus in the sense of \cite{TT}. Poisson (co)homology is somewhat difficult to compute, it is not even known when it is finite (cf. \cite{ES} for results in this direction), nonetheless one can compute it in the case that $\pi$ is non-degenerate, and in all cases one can compute the totalization $H^{\pi}_{*}(M)^{S^{1}}$. \begin{lemma} (Brylinski, \cite{Bry}). If $\pi$ is non-degenerate then we have $H^{\pi}_{*}(M)\cong H^{d-*}(M)$, where $d$ is the dimension of $M$. Similarly we have $H^{*}_{\pi}(M)\cong H^{*}(M)$. \end{lemma}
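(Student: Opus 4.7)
The plan is to use Darboux's theorem to reduce everything to local computations in standard symplectic coordinates, and to produce in each case an explicit isomorphism of sheaves of complexes inducing the desired hypercohomology isomorphism.

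For Poisson cohomology, the starting point is that the non-degeneracy of $\pi$ gives a musical isomorphism $\pi^{\sharp}:\Omega_{M}^{1}\to\Theta_{M}^{1}$, which extends to a grade-preserving isomorphism of sheaves of graded-commutative algebras $\pi^{\sharp}:\Omega_{M}^{\bullet}\to\Theta_{M}^{\bullet}$. The key claim to verify is the intertwining relation
\[
\pi^{\sharp}\circ d_{dR}=[\pi,-]\circ\pi^{\sharp}.
\]
Since both sides are first-order differential operators that agree in degree $0$ (both vanish on functions via constants and compute Hamiltonian vector fields), and both sides are derivations for the wedge/Schouten product of the same degree, it suffices to check agreement on $1$-forms of the form $df$. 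In Darboux coordinates with $\pi=\sum\partial_{p_{i}}\wedge\partial_{q_{i}}$ this is a direct computation. The resulting isomorphism of sheaves of complexes then yields $H^{*}_{\pi}(M)\cong H^{*}(M)$ on passage to hypercohomology.

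For Poisson homology, the strategy is analogous but uses a Poincar\'e-duality style identification shifting degree by $d$. Pick the symplectic volume form $\mathrm{vol}=\omega^{n}/n!$ where $d=2n$ and $\omega=\pi^{-1}$, and let $\star_{\pi}:\Omega_{M}^{k}\to\Omega_{M}^{d-k}$ be the symplectic analogue of the Hodge star, characterised by $\alpha\wedge\star_{\pi}\beta=\langle\alpha,\beta\rangle_{\pi}\mathrm{vol}$, where the pairing on $k$-forms is induced by $\pi^{\sharp}$. This is an isomorphism of sheaves. The content of Brylinski's theorem is then the identity (on $k$-forms, up to an explicit sign)
\[
\mathcal{L}_{\pi}=\pm\,\star_{\pi}\circ d_{dR}\circ\star_{\pi}^{-1}.
\]
Once this is established, $\star_{\pi}$ furnishes an isomorphism of complexes of sheaves between $(\Omega_{M}^{\bullet},\mathcal{L}_{\pi})$ and $(\Omega_{M}^{d-\bullet},d_{dR})$, giving $H^{\pi}_{*}(M)\cong H^{d-*}(M)$.

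The main obstacle is the verification of the second identity, the interchange of $\mathcal{L}_{\pi}$ with $d_{dR}$ via $\star_{\pi}$. This is the genuinely symplectic content of the argument, as opposed to the essentially formal first identity. The cleanest way to handle it is to use Cartan's formula $\mathcal{L}_{\pi}=[d_{dR},\iota_{\pi}]$ (already part of the calculus structure recalled above) together with the elementary relation $\iota_{\pi}=\pm\star_{\pi}^{-1}(\omega\wedge-)\star_{\pi}$, which follows from the definition of $\star_{\pi}$ and the fact that $\omega\wedge$ and $\iota_{\pi}$ are adjoint under the pairing $\langle\,,\,\rangle_{\pi}$ up to sign; the sign tracking, while tedious, is purely local and again reduces to Darboux coordinates. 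Granting this, the two isomorphisms above complete the proof.
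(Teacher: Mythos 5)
Your cohomology half is fine, and is essentially Lichnerowicz's observation; note that it needs no Darboux theorem at all: both $\pi^{\sharp}\circ d_{dR}$ and $[\pi,-]\circ\pi^{\sharp}$ are odd derivations along the algebra map $\pi^{\sharp}$, and on the generators $f$ and $df$ one gets the Hamiltonian field $X_{f}$ on both sides, respectively $0=[\pi,[\pi,f]]=\tfrac{1}{2}[[\pi,\pi],f]$, using only $[\pi,\pi]=0$. Avoiding Darboux is not just cosmetic in this paper's algebraic setting: as the paper itself cautions later, Darboux coordinates exist only formally, not \'{e}tale-locally (e.g. $d\log(x^{1})\,d\log(x^{2})$ on $\mathbb{C}^{*}\times\mathbb{C}^{*}$), so wherever you do invoke Darboux you should say ``formal Darboux coordinates'' and note that identities of differential operators may be checked formally at each point.

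The genuine gap is in the homology half. The paper simply cites Brylinski's theorem that the symplectic star intertwines $d_{dR}$ and $\mathcal{L}_{\pi}$; you instead propose to derive this from Cartan's formula $\mathcal{L}_{\pi}=[d_{dR},\iota_{\pi}]$ together with the conjugation relation $\iota_{\pi}=\pm\star_{\pi}^{-1}(\omega\wedge-)\star_{\pi}$, and this deduction does not go through. Writing $D:=\star_{\pi}d_{dR}\star_{\pi}^{-1}$ and conjugating Cartan's formula by $\star_{\pi}$, your two inputs yield only $\star_{\pi}\mathcal{L}_{\pi}\star_{\pi}^{-1}=[D,\,\omega\wedge\,]$; the desired conclusion $\mathcal{L}_{\pi}=\pm D$ is therefore equivalent to $[D,\,\omega\wedge\,]=\pm d_{dR}$, which is precisely the $\star_{\pi}$-conjugate of the statement being proved. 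So the step ``granting this, the proof is complete'' is circular, and no sign bookkeeping repairs it: Cartan plus the $\omega\wedge$/$\iota_{\pi}$ adjunction contain no information about how $\star_{\pi}$ interacts with $d_{dR}$, and that interaction is exactly the content of Brylinski's identity. To close the gap you must either cite Brylinski for $\mathcal{L}_{\pi}=\pm\star_{\pi}d_{dR}\star_{\pi}$ (as the paper does), or prove it: for instance via the symplectic $sl_{2}$ and the primitive-form decomposition (using $[\omega\wedge,\iota_{\pi}]=(k-n)\,\mathrm{id}$ on $k$-forms and the explicit action of $\star_{\pi}$ on $\omega^{r}\wedge(\text{primitive})$), or by a direct computation in formal Darboux coordinates. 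Once that identity is in hand, the remaining step (an isomorphism of sheaf complexes inducing the isomorphism on hypercohomology) agrees with the paper.
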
\begin{proof} We deal with the case of homology. Let $\omega$ be the symplectic form corresponding to $\pi$. Brylinski shows that the associated symplectic Hodge-$*$ operator (which obviously induces an isomorphism between the graded sheaves $\Omega^{*}_{M}$ and $\Omega^{d-*}_{M}$) intertwines the differentials $d_{dR}$ and $\mathcal{L}_{\pi}$, whence the lemma is proven. The case of cohomology follows similarly.\end{proof} \begin{lemma} There is an isomorphism $H^{\pi}_{*}(M)^{S^{1}}\cong H^{*}(M)((u))$. \end{lemma}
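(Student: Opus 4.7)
The plan is to construct an explicit automorphism $\Phi$ of the sheaf $\Omega_M((u))$ that conjugates the differential $ud_{dR}+\mathcal{L}_\pi$ back to the bare periodic de Rham differential $ud_{dR}$, and then to identify the hypercohomology of the latter with $H^*(M)((u))$. The guiding observation is the Koszul-Brylinski formula $\mathcal{L}_\pi=[d_{dR},\iota_\pi]$ recorded in the introduction, which exhibits $\mathcal{L}_\pi$ as a coboundary for $d_{dR}$; once $u$ is inverted, this coboundary can be absorbed by an inner automorphism.

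Concretely, I would take $\Phi:=\exp(-\iota_\pi/u)$. Because $\iota_\pi$ lowers form degree by $2$ and each stalk of $\Omega_M$ is a finite-dimensional exterior algebra, $\iota_\pi$ is locally nilpotent; the exponential series truncates stalkwise and $\Phi$ is a genuine degree-zero sheaf automorphism of $\Omega_M((u))$. A Hadamard expansion then identifies $\Phi(ud_{dR})\Phi^{-1}$ with $ud_{dR}+\mathcal{L}_\pi$: the linear term in the expansion equals $\mathcal{L}_\pi$ by Cartan-Koszul, while every higher-order term carries a graded commutator of the form $[\iota_\pi,\mathcal{L}_\pi]=\iota_{[\pi,\pi]}$, which vanishes by the Poisson condition $[\pi,\pi]=0$ via the general calculus identity $[\mathcal{L}_a,\iota_b]=\iota_{[a,b]}$.

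With $\Phi$ in hand, the lemma reduces to computing the hypercohomology of $(\Omega_M((u)),ud_{dR})$. Since $u$ is a unit in $\mathbb{C}((u))$ and the differential is $\mathbb{C}((u))$-linear, this complex is the ordinary de Rham complex tensored with $\mathbb{C}((u))$, and so its hypercohomology is $H^*_{dR}(M)((u))=H^*(M)((u))$, as required.

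The main point that will require care is verifying that all higher-order Hadamard terms indeed vanish; this is a structural consequence of the calculus axioms together with $[\pi,\pi]=0$, rather than a bare computation, but it is worth isolating cleanly since an analogous manoeuvre is presumably what the chiral generalization developed in the rest of the paper will be built on.
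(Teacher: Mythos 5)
Your proposal is correct and is essentially the paper's own argument: the paper likewise conjugates by the exponential of the contraction operator, invoking $e^{\iota_{\pi}}d_{dR}e^{-\iota_{\pi}}=d_{dR}+\mathcal{L}_{\pi}$ (Hadamard/BCH, truncating because $[\pi,\pi]=0$ kills the higher commutators) and then reading off the periodic de Rham cohomology. Your version merely makes explicit the $u$-rescaling, the local nilpotency of $\iota_{\pi}$, and the calculus identity $[\mathcal{L}_{\pi},\iota_{\pi}]=\iota_{[\pi,\pi]}$, which the paper leaves implicit.
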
\begin{proof} This is immediate from the identity $e^{\iota_{\pi}}d_{dR}e^{-\iota_{\pi}}=d_{dR}+\mathcal{L}_{\pi}$, which is a special case of the identity $e^{ad_{x}}y=e^{x}ye^{-x}$, itself a special case of the Baker-Campbell-Hausdorff formula. \end{proof}

The purpose of this note is to promote all of the above structure to a \emph{chiral} such, which is to say to produce a similar package where $\Omega_{M}$ is replaced by the sheaf of vertex algebras, $\Omega^{ch}_{M}$, introduced in \cite{MSV}. Roughly speaking one can think of this as a sort of semi-infinite (with respect to positive loops) version of the above, taking place now on the algebraic loop space, $M((z))$, of the variety $M$. The result will be endowed in particular with a grading by $\mathbb{Z}_{\geq 0}$,  referred to as \emph{conformal weight}, which reproduces the above in conformal weight $0$. 

One could simply write down the complexes by hand, however it is also possible to construct the objects of interest rather cleanly, one should first produce an analogue of the calculus above, in particular this requires working with chiral polyvectors $\Theta^{ch}_{M}$ and constructing on them the structure of what probably deserves to be called a \emph{vertex}-$\mathcal{G}$-algebra. This is a graded vector space $V$ endowed now with \emph{two} $\mathbb{Z}$-indexed families of bilinear operations, $$v\otimes w\mapsto v_{(i)}w,\, v\otimes w\mapsto v_{\{i\}}w,$$ satisfying a number of compatibilities in analogy with the contraction and Lie operations corresponding to the action of a $\mathcal{G}$-algebra. There is a corresponding notion of \emph{vertex calculus}. 

\begin{remark} The construction of the \emph{Gerstenhaber operations,} $$v\otimes w\mapsto v_{\{i\}}w,$$ is in the spirit of the work of Lian and Zuckerberg (\cite{LZ}), the product $\{0\}$  is a special case of the construction of \cite{LZ}. Note however that \cite{LZ} deals with the construction of a $\mathcal{G}$-algebra on the structure of the BRST complex associated to a $\mathcal{N}$=2 vertex algebra, which cohomology vanishes in conformal weights greater than $0$, it is really the higher conformal weight Poisson (co)homology which interests us in this note. \end{remark} The corresponding vertex calculus for the Poisson variety $(M,\pi)$ is then obtained in a manner in exact analogy with the above (non-chiral) procedure, the cohomological differential will be $\pi_{\{0\}}$ and the homological differential the corresponding Lie derivative. 

More explicitly, and with respect to \'{e}tale local coordinates $x^{i}$ on $M$, one has vectors  $x^{i}_{j}, y^{i}_{j+1}, \psi^{i}_{j}, \phi^{i}_{j+1}$, $j\geq 0$, in $\Theta^{ch}_{M}$, where Latin letters denote bosonic vectors and Greek letters fermionic ones. The vectors $\psi$ transform as vector fields, and $\phi$ as forms, the $y$ transform as vector fields plus an extra contribution coming from fermionic vectors (cf. \cite{MSV} for a detailed discussion). If $\pi$ is given with respect to these coordinates as $\pi_{ij}\partial^{i}\partial^{j}$, then the differential on $\Theta^{ch}_{M}$ will be the residue $$\partial^{ch}_{\pi}:=(\partial_{k}\pi_{ij}\psi_{0}^{i}\psi_{0}^{j}\phi_{1}^{k}+\pi_{ij}\psi_{0}^{i}y_{1}^{j}-\pi_{ij}\psi_{0}^{j}y_{1}^{i})_{(0)},$$ which the reader can easily confirm induces the correct differential in conformal weight $0$. Setting $Q:=y^{i}_{1}\phi^{i}_{1}$, we observe that we have $\partial^{ch}_{\pi}=Q_{(0)}\pi$, so that the differential is given by $(Q_{(0)}\pi)_{(0)}$, which the initiated reader will recognise from the construction of \cite{LZ}. The homological differential will be the corresponding Lie derivative, $\mathcal{L}^{ch}_{\pi}$, acting on $\Omega^{ch}_{M}$.

We summarize the results of this note in the following theorem, \begin{theorem}

\begin{itemize}\item \emph{Formal structure}. If $(M,\pi)$ is a smooth Poisson variety then the associated tuple $((\Theta^{ch}_{M},\partial^{ch}_{\pi}),(\Omega^{ch}_{M},\mathcal{L}^{ch}_{\pi}),d^{ch}_{dR})$ defines a sheaf of vertex calculi.\item \emph{De Rham invariants}. The hypercohomology of the $S^{1}$-fixed points are identified with $H^{ch}(M)((u))$, the $2$-periodization of the hypercohomology of the $\Omega^{ch}_{M}$ with vanishing differential. \item \emph{Brylinksi type theorem}. If $\pi$ is non-degenerate then the hypercohomology of $(\Theta^{ch}_{M},\partial^{ch}_{\pi})$ and $(\Omega^{ch},\mathcal{L}^{ch}_{\pi})$ vanish in conformal weight greater than $0$. \end{itemize} \end{theorem}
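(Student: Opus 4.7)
I treat the three bullets in order. Once the vertex-calculus formalism for the tuple $(\Theta^{ch}_M, \Omega^{ch}_M, d^{ch}_{dR})$ has been set up in the preceding sections, the Poisson enhancement becomes a formal consequence of the chiral avatar of $[\pi,\pi]=0$. The first task is to verify that $(\partial^{ch}_\pi)^2 = 0$ is equivalent to a chiral Jacobi identity for $\pi$, using the graded Jacobi of the Gerstenhaber $\{i\}$-products in a vertex-$\mathcal{G}$-algebra; the identity $(\mathcal{L}^{ch}_\pi)^2 = 0$ and the compatibilities of $\partial^{ch}_\pi$ and $\mathcal{L}^{ch}_\pi$ with the $\wedge$-product, the bracket, and the contraction then follow by unravelling the vertex-calculus axioms. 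Cartan's relation $[d^{ch}_{dR}, \iota^{ch}_\pi] = \mathcal{L}^{ch}_\pi$ is part of the vertex-calculus data and passes to the Poisson differentials by naturality.

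\textbf{De Rham invariants.} The approach mirrors the classical BCH argument. The formal identity $e^{\iota^{ch}_\pi/u}(u d^{ch}_{dR}) e^{-\iota^{ch}_\pi/u} = u d^{ch}_{dR} + \mathcal{L}^{ch}_\pi$ depends only on $[d^{ch}_{dR}, \iota^{ch}_\pi] = \mathcal{L}^{ch}_\pi$ and $[\iota^{ch}_\pi, \mathcal{L}^{ch}_\pi] = 0$, both available from the previous bullet, and so intertwines the totalized differential with $u d^{ch}_{dR}$ alone. The one new subtlety over the classical case is that $\iota^{ch}_\pi$ is no longer nilpotent, since chiral form degree is unbounded; I would handle this by passing to a completion of $\Omega^{ch}_M((u))$ with respect to conformal weight, in which each weight piece involves only finitely many form degrees and the exponential converges. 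It remains to identify $(\Omega^{ch}_M((u)), u d^{ch}_{dR})$ with $\mathbb{H}^{*}(\Omega^{ch}_M,0)((u))$, which I would do via the spectral sequence in powers of $u$, whose $E_1$-page already computes $H^{ch}(M)((u))$.

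\textbf{Brylinski-type theorem.} Both vanishing statements are local, so I would reduce by \'{e}tale Darboux coordinates to the case $M = V$ of a symplectic vector space with constant $\pi$. On such a $V$ I would construct a chiral symplectic Hodge star on $\Omega^{ch}_V$, built as a vertex operator from iterated contractions with $\pi$, and verify that it intertwines $\mathcal{L}^{ch}_\pi$ with $d^{ch}_{dR}$. Combined with the chiral Poincar\'{e} lemma, asserting that $(\Omega^{ch}_V, d^{ch}_{dR})$ has cohomology concentrated in conformal weight $0$, this yields the vanishing of $\mathcal{L}^{ch}_\pi$-cohomology in positive weight locally; a standard hypercohomology spectral sequence then globalises. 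For $(\Theta^{ch}_M, \partial^{ch}_\pi)$ one uses the musical isomorphism $\pi^{\sharp} : T^{*}_M \to T_M$ to build a vertex-algebra isomorphism $\Theta^{ch}_M \simeq \Omega^{ch}_M$ carrying $\partial^{ch}_\pi$ to $d^{ch}_{dR}$, reducing to the previous case.

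The principal obstacle I foresee is in the third bullet: producing the chiral symplectic Hodge star explicitly, and verifying the intertwining at the full vertex level, will require careful handling of the normal-ordering corrections coming from the fermionic sector of $\Omega^{ch}_V$, and I expect the combinatorics there to dominate the work.
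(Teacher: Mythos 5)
Your first bullet is essentially the paper's construction (square-zero of $\partial^{ch}_{\pi}$ from $[\pi,\pi]=0$ via the vertex-$\mathcal{G}$ identities, Cartan's formula holding by the very definition of $\mathcal{L}^{ch}$), but the other two bullets have genuine gaps. For the Brylinski-type statement, your opening reduction ``by \'{e}tale Darboux coordinates'' is exactly the step the paper warns is unavailable: an algebraic symplectic form need not be standard \'{e}tale-locally (the form $d\log(x^{1})\wedge d\log(x^{2})$ on $\mathbb{C}^{*}\times\mathbb{C}^{*}$ is the cited counterexample), so the localization must instead pass through \emph{formal} Darboux coordinates and Gelfand--Kazhdan descent: $(\Omega^{ch}_{M},\mathcal{L}^{ch}_{\pi})$ is recovered from the formal disc model along the torsor of formal symplectic coordinate systems. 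More seriously, the engine of your argument --- a chiral symplectic Hodge star intertwining $\mathcal{L}^{ch}_{\pi}$ with $d^{ch}_{dR}$ at the full vertex level --- is precisely what you do not construct, and you yourself flag it as the dominant difficulty; no such operator is needed. The paper's proof is much softer: on the formal $2$-disc (to which the case $n=1$ reduction applies) the odd vector $H=x^{1}_{1}\phi^{2}_{0}-x^{2}_{1}\phi^{1}_{0}$ satisfies $[H_{(0)},\mathcal{L}^{ch}_{\omega^{-1}}]=L_{(1)}$, the conformal-weight grading operator, so all positive-weight cohomology is killed by an explicit homotopy and weight $0$ is classical Brylinski. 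Without either the formal-geometry descent or a substitute for the Hodge star, your third bullet is not a proof; likewise the musical-isomorphism reduction for $\Theta^{ch}_{M}$ is asserted rather than checked at the vertex level.

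For the $S^{1}$-fixed points your conjugation by $\exp(\iota^{\pi}_{0}/u)$ is the paper's argument, and the completion you introduce is unnecessary: $\iota^{\pi}_{0}$ preserves conformal weight and the cohomological degree is bounded on each fixed-weight piece, so it is already locally nilpotent (you essentially observe this yourself). But your final identification is incorrect: the cohomology of $(\Omega^{ch}_{M}((u)),u\,d^{ch}_{dR})$ is not $\mathbb{H}^{*}(\Omega^{ch}_{M},0)((u))$ --- the spectral sequence in powers of $u$ has $d_{1}=d^{ch}_{dR}$ and does not degenerate at $E_{1}$. Since $u$ is invertible, the conjugated complex is a two-periodization of $(\Omega^{ch}_{M},d^{ch}_{dR})$, and the correct conclusion, which is what the paper's lemma actually proves, is $H^{*}_{dR}(M)((u))$, by the Malikov--Schechtman--Vaintrob theorem that the chiral de Rham differential has hypercohomology equal to de Rham cohomology concentrated in conformal weight $0$. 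So the content of the second bullet is that nothing of positive conformal weight survives, and your proposed route would not establish (indeed would contradict) that.
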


\section{vertex calculus} \subsection{Chiral Polyvectors} We assume familiarity with the basic theory of vertex algebras, the reader may consult \cite{MSV} for an introduction. The vertex algebras with which we deal will all be \emph{super} such, we adopt the convention where all commutators are assumed to be super-commutators, vertex algebras are to be assumed vertex super-algebras and so on.  We will assume further that the reader is familiar with the construction of the chiral de Rham complex, $\Omega^{ch}_{M}$, as given for example in \cite{MSV}. 

Throughout $x^{i}$ will denote \'{e}tale local coordinates on the variety $M$, as above we have correspoding generating vectors $x^{i}_{0}, y^{i}_{1}, \phi^{i}_{0}, \psi^{i}_{1}$, where lower subscripts denote conformal weight, $x,y$ are bosonic and $\phi, \psi$ are fermionic. Note that the conformal weight $0$ subspace is just the sheaf of forms $\Omega_{M}$. $\Omega^{ch}_{M}$ is a vertex operator algebra for any smooth $M$ and an $\mathcal{N}=2$ such in the Calabi-Yau case. The sheaf of chiral polyvectors, denoted $\Theta^{ch}_{M}$ is defined by conformally re-grading $\Omega^{ch}_{M}$, so that the generating $\phi$-vectors are now of weight $1$, and the $\psi$ vectors of weight $0$. When care is needed we will denote the corresponding vectors $\bar{\psi}$ and $\bar{\phi}$.\begin{remark}This apparently trivial adaptation hides some slight subtleties, the formulae of \cite{MSV} now imply that $\Theta^{ch}_{M}$ is \emph{no longer} endowed with a Virasoro vector compatible with its conformal grading, unless $M$ is Calabi-Yau. Happily, in the CY case $\Theta^{ch}_{M}$ is endowed with a $\mathcal{N}=2$ structure, cf. \cite{Gor}. We note here that, as remarked in \cite{MSV}, all objects exist canonically as \emph{gerbes}.\end{remark}

\begin{definition} On a formal $D$-disc $\Delta^{D}$, we define the vector $\bar{Q}\in\Theta^{ch}_{\Delta^{D}}$ of conformal weight $2$ by $$\bar{Q}=y^{i}_{1}\bar{\phi}^{i}_{1}.$$ \end{definition}
\begin{lemma} The $0$-mode of this vector is invariant under the action of $Aut(\Delta^{D})$, and so $\bar{Q}_{(0)}$ is defined on $\Theta^{ch}_{M}$ for any smooth $M$. \end{lemma}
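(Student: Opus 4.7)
The plan is to reduce the statement to the coordinate invariance of the chiral de Rham differential on $\Omega^{ch}_{\Delta^{D}}$ established in \cite{MSV}. The essential observation is that the conformal re-grading passage from $\Omega^{ch}_{M}$ to $\Theta^{ch}_{M}$ leaves the underlying vertex superalgebra, and hence its $\mathrm{Aut}(\Delta^{D})$-action, completely unchanged; only the conformal weight bookkeeping shifts. Thus $\bar{Q} = y^{i}_{1}\bar{\phi}^{i}_{1}\in \Theta^{ch}_{\Delta^{D}}$ is the same underlying vector as $y^{i}_{1}\phi^{i}_{0}\in \Omega^{ch}_{\Delta^{D}}$, and its $(0)$-mode agrees, as an endomorphism of the underlying vector space, with the chiral de Rham differential $d^{ch}_{dR}$ of \cite{MSV}.

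To make this explicit, under a formal coordinate change $\tilde{x}^{i}=f^{i}(x)$ with inverse $g$, \cite{MSV} derives transformation laws of the schematic shape
$$\tilde{\bar{\phi}}^{i} = (\partial_{j}f^{i})\,\bar{\phi}^{j}, \qquad \tilde{y}^{i} = (\partial_{i}g^{j})(\tilde{x})\,y^{j} + (\text{fermionic normal-ordering correction}).$$
Substituting into $\tilde{y}^{i}\tilde{\bar{\phi}}^{i}$ and contracting Jacobians via $(\partial_{i}g^{j})(\partial_{k}f^{i})=\delta_{jk}$, the principal part reproduces $\bar{Q}$, while the anomalous correction is $T$-exact in the sense of the vertex-algebra translation operator. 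Since $(Tv)_{(0)}=0$ for every $v$, the zero-mode is preserved, yielding the asserted $\mathrm{Aut}(\Delta^{D})$-invariance of $\bar{Q}_{(0)}$. Standard descent along étale-local coordinate patches then promotes $\bar{Q}_{(0)}$ to a globally defined operator on $\Theta^{ch}_{M}$ for any smooth $M$.

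The main technical difficulty is precisely the control of the normal-ordering anomaly in the transformation of $y^{i}$: one must verify that this anomaly, after multiplication by the transformed fermion, lands in the image of $T$. This is the computational heart of the MSV construction, and since the underlying vertex algebra and coordinate action are literally unchanged under our re-grading, that argument transports here without modification; no new ingredient is needed beyond the observation that re-grading is harmless at the level of the $\mathrm{Aut}(\Delta^{D})$-action.
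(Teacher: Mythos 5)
Your proposal is correct and takes essentially the same route as the paper, whose proof simply invokes the transformation formulae of \cite{MSV}; your framing — that the conformal re-grading leaves the underlying vertex algebra and the $Aut(\Delta^{D})$-action untouched, so that $\bar{Q}_{(0)}$ is literally the chiral de Rham differential, whose coordinate invariance (anomalous terms in the transformation of $y$ contributing only $T$-exact, hence zero-mode-trivial, corrections) is established in \cite{MSV} — is just a slightly more explicit packaging of that same appeal.
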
\begin{proof} This follows from the formulae of \cite{MSV}.\end{proof}
\subsection{Brackets}
\begin{definition} We define the Gerstenhaber $i$-products on $\Theta^{ch}_{M}$, $v\otimes w\mapsto v_{\{i\}}w$, by $$v_{\{i\}}w:=(\bar{Q}_{(0)}v)_{(i)}w.$$\end{definition}
Let us note the following basic properties of these products, \begin{itemize} \item The product $v\otimes w\mapsto v_{\{i\}}w$ is of conformal weight $-i$, and so $\{0\}$ preserves the conformal weight $0$ subspace. \item The product is of fermion number $-1$, so that if $v$ and $w$ are of cohomological degree $n$ and $m$ respectively, then $v_{\{i\}}w$ is of cohomological degree $n+m-1$. \item \'{E}tale locally on $M$, the product $v\otimes w\mapsto v_{\{i\}}w$ precisely measures the failure of the conformal weight $0$ mode, $\bar{Q}_{(1)}$, to be a derivation with respect to the $(i-1)$-product. Indeed, this follows from a Borcherds-Jacobi identity, $$[\bar{Q}_{(1)},v_{(j-1)}]-(\bar{Q}_{(1)}v)_{(j-1)}=(\bar{Q}_{(0)}v)_{(j)},$$ so that $\bar{Q}_{(1)}$ provides a sort of BV-operator for the product $_{\{j\}}$. In particular this BV-operator exists when $M$ is Calabi-Yau\end{itemize}

\begin{lemma} The product $v\otimes w\mapsto v_{\{0\}}w$ restricts on the conformal weight $0$ subspace to the usual Gerstenhaber bracket of polyvector fields.\end{lemma}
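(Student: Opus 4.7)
The plan is to identify both the $\{0\}$-bracket and the Schouten--Nijenhuis bracket as graded biderivations of the wedge product on $\Theta_{M}$, and then match them on generators. First I would identify the conformal weight $0$ subspace of $\Theta^{ch}_{M}$: after the regrading, the only generators of weight $0$ are the bosonic $x^{i}$ and the fermionic $\bar{\psi}^{i}_{0}$, so \'{e}tale locally this subspace is the free supercommutative algebra on these symbols. The canonical identification with $\Theta_{M}$ sends $\bar{\psi}^{i}_{0}$ to $\partial_{i}$, and under it the $(-1)$-product restricted to weight $0$ becomes the wedge product of polyvector fields.

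Next I would compute $\bar{Q}_{(0)}$ on these generators. A direct Wick computation using the OPEs $y^{i}(z)x^{j}(w)\sim\delta^{ij}/(z-w)$ and $\bar{\phi}^{i}(z)\bar{\psi}^{j}(w)\sim\delta^{ij}/(z-w)$ gives $\bar{Q}_{(0)}x^{j}=\bar{\phi}^{j}_{1}$ and $\bar{Q}_{(0)}\bar{\psi}^{j}=y^{j}_{1}$. Since $\bar{Q}_{(0)}$ is automatically a super derivation of the $(-1)$-product (a general feature of $0$-modes), this determines $\bar{Q}_{(0)}$ on the whole weight-$0$ subspace, for instance $\bar{Q}_{(0)}f=\sum_{j}(\partial_{j}f)\bar{\phi}^{j}_{1}$ for a function $f(x)$. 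The $0$-modes of the resulting weight-$1$ vectors then act on weight-$0$ arguments in the expected way: $(y^{j}_{1})_{(0)}$ as the coordinate vector field $\partial/\partial x^{j}$, and $(\bar{\phi}^{j}_{1})_{(0)}$ as the interior contraction removing a $\bar{\psi}^{j}$. Substituting into $v_{\{0\}}w=(\bar{Q}_{(0)}v)_{(0)}w$ and evaluating on generator pairs then yields $x^{i}_{\{0\}}x^{j}=0$, $\bar{\psi}^{i}_{\{0\}}x^{j}=\delta^{ij}$, and $\bar{\psi}^{i}_{\{0\}}\bar{\psi}^{j}=0$, precisely matching the Schouten--Nijenhuis bracket on generators.

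To conclude, I would verify that $(v,w)\mapsto v_{\{0\}}w$ is a graded biderivation of the wedge product. Derivation in the right slot is automatic, since $v_{\{0\}}=(\bar{Q}_{(0)}v)_{(0)}$ is the $0$-mode of a vertex operator. Derivation in the left slot reduces, via the Wick formula for $(AB)_{(0)}$ acting on a weight-$0$ argument, to $\bar{Q}_{(0)}$ being a derivation of the $(-1)$-product; most of the Wick terms drop out on weight grounds, using that in the non-negatively graded algebra $\Theta^{ch}_{M}$ the mode $C_{(n)}$ with $n\geq\mathrm{wt}(C)$ kills every weight-$0$ vector. Since $\Theta_{M}$ is \'{e}tale locally a free supercommutative algebra on the $x^{i}$ and the $\bar{\psi}^{j}$, a graded biderivation is determined by its values on generators, and so the two brackets agree. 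The main obstacle is sign bookkeeping: the fermion-parity conventions ($|\bar{\psi}|=+1$, $|\bar{\phi}|=-1$ after regrading) interact nontrivially with the Wick formula, and one must track these carefully to confirm that the left-slot Leibniz rule emerges with exactly the sign dictated by the Gerstenhaber-algebra axioms.
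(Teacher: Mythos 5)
Your proposal is correct and follows essentially the same route as the paper: exploit that $v_{\{0\}}=(\bar{Q}_{(0)}v)_{(0)}$ is a derivation of the $(-1)$-product and reduce to the generator computations $\bar{Q}_{(0)}x^{i}=\bar{\phi}^{i}_{1}$, $\bar{Q}_{(0)}\bar{\psi}^{i}=y^{i}_{1}$, giving $\bar{\psi}^{i}_{\{0\}}x^{j}=\delta_{ij}$ etc. You simply spell out the left-slot Leibniz rule and the weight-truncation argument that the paper leaves implicit.
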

\begin{proof} $v_{\{0\}}=(\bar{Q}_{(0)}v)_{(0)}$ acts a derivation with respect to all $(j)$-products, in particular with respect to the $(-1)$-product. We must then verify $(\bar{\psi}^{i}_{0})_{\{{-1}\}}x^{j}_{0}=(-x^{j}_{0})_{\{-1\}}\bar{\psi}^{i}_{0}=\delta_{ij}.$  This follows immediately from $\bar{Q}_{(0)}(\bar{\psi}^{i}_{0})=y^{i}_{1}$ and $\bar{Q}_{(0)}x^{i}_{0}=\bar{\phi}^{i}_{1}$.\end{proof}

\begin{corollary} If $\pi$ is a Poisson form then the operator $\pi_{\{0\}}$ defines on $\Theta^{ch}_{M}$ a cohomological differential, which is moreover a derivation with respect to all $(j)$-products, restricting to the Poisson cohomology differential on the subspace of conformal weight $0$.  \end{corollary}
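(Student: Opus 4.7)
The plan is to verify the three claims in turn. The derivation property with respect to all $(j)$-products is immediate: $\pi_{\{0\}}=(\bar{Q}_{(0)}\pi)_{(0)}$ is a $(0)$-mode, and Borcherds--Jacobi ensures every $(0)$-mode acts as a super-derivation of all $(j)$-products, exactly as invoked in the preceding proof. The restriction to conformal weight $0$ is equally easy: $\{0\}$ preserves conformal weight and agrees there with the usual Gerstenhaber bracket by the preceding lemma, so $\pi_{\{0\}}$ specialises to the Lichnerowicz differential $[\pi,-]$.

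The substantive content is the nilpotence $\pi_{\{0\}}^2=0$. I would first establish $\bar{Q}_{(0)}^2 = 0$ by direct computation: the only non-trivial OPE pairs among the generators are $y^i$--$x^j$ and $\bar{\psi}^i$--$\bar{\phi}^j$, whence $\bar{Q}_{(0)}$ annihilates both $y^i$ and $\bar{\phi}^i$; since $(0)$-modes derive the normal-ordered product and $\bar{Q}=:y^i\bar{\phi}^i:$, this gives $\bar{Q}_{(0)}\bar{Q}=0$, so $[\bar{Q}_{(0)},\bar{Q}_{(0)}]=(\bar{Q}_{(0)}\bar{Q})_{(0)}=0$ by Borcherds--Jacobi and hence $\bar{Q}_{(0)}^2=0$ using the odd cohomological parity of $\bar{Q}$. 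A second application of Borcherds--Jacobi then yields $[\bar{Q}_{(0)},\pi_{\{0\}}]=(\bar{Q}_{(0)}^2\pi)_{(0)}=0$, so $\pi_{\{0\}}$ super-anticommutes with $\bar{Q}_{(0)}$. Meanwhile $\pi_{\{0\}}\pi$ sits in conformal weight $0$, where by the preceding lemma it equals $[\pi,\pi]=0$. Expanding $\pi_{\{0\}}^2 v = \pi_{\{0\}}\bigl((\bar{Q}_{(0)}\pi)_{(0)}v\bigr)$ by the derivation property, and then using $\pi_{\{0\}}\bar{Q}_{(0)}\pi = -\bar{Q}_{(0)}(\pi_{\{0\}}\pi)=0$, gives $2\pi_{\{0\}}^2 v=0$, as required.

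The main obstacle is this nilpotence argument, which requires two separate ingredients---the weight-zero identity $\pi_{\{0\}}\pi=0$ coming from $[\pi,\pi]=0$, and the vertex nilpotence $\bar{Q}_{(0)}^2=0$---bridged by the super-anticommutation of $\pi_{\{0\}}$ with $\bar{Q}_{(0)}$, itself a formal consequence of the latter. Once these are in place the full statement on $\Theta^{ch}_M$ follows cleanly from the derivation property already used above.
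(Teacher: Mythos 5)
Your proof is correct and follows essentially the same route as the paper: both reduce $\pi_{\{0\}}^{2}=0$ to the identity $[\pi_{\{0\}},\pi_{\{0\}}]=(\pi_{\{0\}}\pi)_{\{0\}}$ via the Borcherds--Jacobi/derivation property of $(0)$-modes, the anticommutation $[\bar{Q}_{(0)},\pi_{\{0\}}]=(\bar{Q}_{(0)}^{2}\pi)_{(0)}=0$, and the weight-zero identification $\pi_{\{0\}}\pi=[\pi,\pi]=0$ from the preceding lemma. Your explicit verification of $\bar{Q}_{(0)}^{2}=0$ is a detail the paper takes as known, but it is harmless and correct.
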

\begin{proof} We compute $$2\pi_{\{0\}}^{2}=\big[\pi_{\{0\}},\pi_{\{0\}}\big]:=\big[(\bar{Q}_{(0)}\pi)_{(0)},(\bar{Q}_{(0)}\pi)_{(0)}\big],$$ $$\big[(\bar{Q}_{(0)}\pi)_{(0)},(\bar{Q}_{(0)}\pi)_{(0)}\big]=((\bar{Q}_{(0)}\pi)_{(0)}\bar{Q}_{(0)}\pi)_{(0)}.$$ Now we note $$\big[\bar{Q}_{(0)},\,(\bar{Q}_{(0)}\pi)_{(0)}\,\big]= (\bar{Q}_{0}^{2}\pi)_{0}=0,$$  whence we deduce that $$((\bar{Q}_{(0)}\pi)_{(0)}\bar{Q}_{(0)}\pi)_{(0)}=(\bar{Q}_{(0)}(\bar{Q}_{(0)}\pi)_{(0)})_{(0)}:=(\pi_{\{0\}}\pi)_{\{0\}}=0.$$ We have seen above that $\{0\}$ restricts to the Gerstenhaber bracket on polyvector fields, whence the induced map on the conformal weight $0$ subspace is as claimed.  \end{proof}

\begin{remark} The identity $\big[\pi_{\{0\}},\,\pi_{\{0\}}\,\big]=(\pi_{\{0\}}\pi)_{\{0\}}$ is the special case of a general identity which is valid in any vertex $\mathcal{G}$-algebra, $$\big[v_{\{i\}},\,w_{\{j\}}\,\big]=\sum_{k=0}^{\infty}\binom{i}{k}\big(v_{\{k\}}w\big)_{\{i+j-k\}}$$ which shows that in the language of \cite{He} the Gerstenhaber products satsify the axioms of a (shifted) $Lie^{*}$-algebra.\end{remark}

\begin{remark} The above differential graded vertex algebra contains a large commutative sub-algebra, generated by the $x$ and $\psi$ variables. This is simply the space of arcs into the derived variety $T^{*}_{\pi}[-1]M$. According to a lemma of Arakawa (\cite{A}), such has the structure of a (shifted) \emph{Poisson vertex algebra}, the Gerstenhaber products $v\otimes w\mapsto v_{\{i\}}w$, for $i\geq 0$, recover this structure. \end{remark}

\begin{definition} A \emph{vertex} $\mathcal{G}$-\emph{algebra} is defined to be a vertex algebra $V$ endowed with odd bilinear operations of $v\otimes w\rightarrow v_{\{i\}}w$, of conformal weight $-i$, satisfying the following quadratic relations (with the vertex products). \begin{itemize}\item \emph{Lie* structure}, $$\big[v_{\{i\}},w_{\{j\}}\big]=\sum_{k=0}^{\infty}\binom{i}{k}\big(v_{\{k\}}w\big)_{\{i+j-k\}},$$ \item \emph{Generalized derivation property,} $$\big[v_{\{i\}},w_{(j)}\big]=\sum_{k=0}^{\infty}\binom{i}{k}\big(v_{\{k\}}w\big)_{(i+j-k)},$$  \item \emph{Bracket of a product,} $$(v_{(i)}w)_{\{j\}}=\sum_{k=0}^{\infty}(-1)^{k}\binom{i}{k}\big(v_{\{i-k\}}v_{j+k}+u_{(i-k)}v_{\{j+k\}}-(-1)^{k}(v_{\{i+j-k\}}u_{(k)}+v_{(i+j-k)}u_{\{k\}})\big).$$\end{itemize}\end{definition}

\begin{remark} We have included the above definition for completeness, and because it may be of interest to some readers, we do not strictly require the full structure above for the results presented in this note. \end{remark}

The above discussion can now be summarized in the following lemma, \begin{lemma} The products $v\otimes w\mapsto v_{\{i\}}w:=(\bar{Q}_{(0)}v)_{(i)}w$  endow the sheaf of chiral polyvector fields, $\Theta^{ch}_{M}$, with the structure of a vertex $\mathcal{G}$-algebra.\end{lemma}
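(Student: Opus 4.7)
The plan is to reduce each of the three axioms in the definition of a vertex $\mathcal{G}$-algebra to a Borcherds-Jacobi identity for the ambient vertex algebra $\Theta^{ch}_M$, exploiting that by construction $v_{\{i\}}w=(\bar{Q}_{(0)}v)_{(i)}w$ so every Gerstenhaber product is a vertex product precomposed with the global odd operator $\bar{Q}_{(0)}$. Two structural facts about $\bar{Q}_{(0)}$ are needed. First, $\bar{Q}_{(0)}$ is an odd derivation of every $(n)$-product, which is a direct consequence of the Jacobi identity $[\bar{Q}_{(0)},a_{(n)}]=(\bar{Q}_{(0)}a)_{(n)}$ valid for any $0$-mode. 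Second, $\bar{Q}_{(0)}^{2}=0$: working in étale local coordinates, the generators $y^{i}$ and $\bar{\phi}^{i}$ have no singular OPEs among themselves (singular OPEs only pair $y$ with $x$ and $\bar{\phi}$ with $\bar{\psi}$), so Wick's theorem gives $\bar{Q}(z)\bar{Q}(w)\sim 0$, whence $\bar{Q}_{(0)}\bar{Q}=0$ and $2\bar{Q}_{(0)}^{2}=(\bar{Q}_{(0)}\bar{Q})_{(0)}=0$.

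With these two ingredients the generalized derivation property is essentially immediate: one writes $[v_{\{i\}},w_{(j)}]=[(\bar{Q}_{(0)}v)_{(i)},w_{(j)}]$ and expands via the Borcherds commutator identity to obtain $\sum_{k}\binom{i}{k}((\bar{Q}_{(0)}v)_{(k)}w)_{(i+j-k)}=\sum_{k}\binom{i}{k}(v_{\{k\}}w)_{(i+j-k)}$. The Lie* identity is obtained the same way, starting instead from $[(\bar{Q}_{(0)}v)_{(i)},(\bar{Q}_{(0)}w)_{(j)}]$ and then recognizing the right-hand side as $\sum_{k}\binom{i}{k}(\bar{Q}_{(0)}(v_{\{k\}}w))_{(i+j-k)}$; here the derivation property of $\bar{Q}_{(0)}$ combined with $\bar{Q}_{(0)}^{2}=0$ is exactly what collapses the extra term $((\bar{Q}_{(0)}^{2}v)_{(k)}w)_{(i+j-k)}$ and produces the stated expression, with the super signs absorbed into our convention that all commutators are super-commutators.

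For the bracket of a product the idea is to start from $(v_{(i)}w)_{\{j\}}=(\bar{Q}_{(0)}(v_{(i)}w))_{(j)}$, apply the odd derivation property to split this into $((\bar{Q}_{(0)}v)_{(i)}w)_{(j)}\pm(v_{(i)}(\bar{Q}_{(0)}w))_{(j)}$, and then re-express each piece using the Borcherds-Jacobi identity together with the quasi-commutativity $v_{(i)}w=(-1)^{|v||w|}\sum_{k}(-1)^{i+k+1}T^{(k)}(w_{(i+k)}v)/k!$ to absorb the $T$-derivatives into the standard binomial sums that appear in the axiom. The main obstacle I anticipate is not conceptual but combinatorial: organizing the several applications of quasi-commutativity and the derivation property so that the four terms, with their fermionic signs, reassemble into the exact symmetric shape of the third axiom. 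Once this is done in a local chart the identities automatically promote to statements on the sheaf $\Theta^{ch}_{M}$, since by the earlier lemma $\bar{Q}_{(0)}$ is $\mathrm{Aut}(\Delta^{D})$-invariant and therefore globally defined, so the products $v_{\{i\}}w$ glue sheafily and the axioms hold on all of $M$.
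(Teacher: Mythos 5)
Your proposal is correct and follows essentially the same route as the paper: reduce all three axioms to Borcherds--Jacobi identities by substitution and by commuting with $\bar{Q}_{(0)}$, using only that $\bar{Q}_{(0)}$ is odd, of the right conformal weight, and square zero. Your explicit OPE check that $\bar{Q}_{(0)}^{2}=0$ is a welcome addition the paper leaves implicit, but it does not change the argument.
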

\begin{proof} All of the above identities follow from Borcherds-Jacobi identities either by substition or by commuting with $\bar{Q}_{(0)}$. The only properties of $\bar{Q}_{0}$ which we required are that it be of approrpriate conformal weight (2) and parity (odd), and that $\bar{Q}_{(0)}^{2}=0$. \end{proof}

\subsection{Action on Chiral Forms} We construct now an action of the sheaf of $\mathcal{G}$-vertex algebras $\Theta^{ch}_{M}$ on $\Omega^{ch}_{M}$. This amounts to the data of a family of contraction and Lie derivative operators, each $\mathbb{Z}$-indexed. That is to say, $v\in\Theta^{ch}_{M}$ determines for each $j\in\mathbb{Z}$, operators $\iota^{v}_{j}$ and $\mathcal{L}^{v}_{j}$, satisfying a bunch of identities and compatibilites with the bracket products $v\otimes w\mapsto v_{\{i\}}w$. We will not go into laborious detail regarding this as it is rather clear how to define a representation of a vertex $\mathcal{G}$-algebra, one demands for example that the following holds, $$\big[\mathcal{L}^{v}_{i},\,\mathcal{L}^{w}_{j}\,\big]=\sum_{k=0}^{\infty}\binom{i}{k}\mathcal{L}^{v_{\{k\}}w}_{i+j-k}.$$ 

We will of course also wish to have an analogue of the Cartan formula, in general this is a special property of a given representation of a (vertex), $\mathcal{G}$-algebra. Recall that when such a formula is available, one says that the resulting data forms a calculus. 
The action by Lie derivatives constructed will in particular specialize to the infinitesimal form of the action of automorphisms produced in \cite{MSV}. If $V$ is a vertex super-algebra we will write $Lie(V)$ for the Lie super-algebra generated by the modes of fields of $V$. We begin with the following trivial lemma, \begin{lemma} There is an isomorphism of sheaves of Lie algebras $Lie(\Theta^{ch}_{M})\rightarrow Lie(\Omega^{ch}_{M})$. \end{lemma}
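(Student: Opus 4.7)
The plan is to observe that the passage from $\Omega^{ch}_{M}$ to $\Theta^{ch}_{M}$ is purely a change of conformal grading: it replaces the Virasoro vector (when one exists) but leaves the underlying vertex algebra structure — the $(n)$-products and the translation operator $T$ — entirely intact. Since $Lie(V)$ is built from modes of fields, which are manufactured out of the state–field correspondence and $T$ alone, the identity map on the underlying sheaf of super vector spaces will furnish the desired isomorphism.

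More concretely, I would first recall the construction of \cite{MSV}: on a formal disc, $\Omega^{ch}_{\Delta^{D}}$ is freely generated as a vertex super-algebra by $x^{i}, y^{i}, \phi^{i}, \psi^{i}$ with prescribed $(n)$-products, and $\Omega^{ch}_{M}$ is obtained by gluing these local models via transition functions whose action is expressed entirely in terms of modes of the generating fields. The passage to $\Theta^{ch}_{M}$ consists of the relabelling $\phi \leftrightarrow \bar{\phi}$, $\psi \leftrightarrow \bar{\psi}$ together with the reassignment of their conformal weights from $(0,1)$ to $(1,0)$; none of the $(n)$-products is altered, and neither is $T$. In particular the local vertex super-algebras agree on the nose, and because the \'{e}tale gluing cocycle of \cite{MSV} is intrinsic to the vertex algebra data (not to the Virasoro vector), the two sheaves coincide over $M$ once their conformal gradings are forgotten.

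The isomorphism of mode Lie algebras is then tautological: for any vertex super-algebra $V$, the bracket in $Lie(V)$ is computed by the Borcherds commutator formula
\[ [v_{(m)}, w_{(n)}] = \sum_{k\geq 0} \binom{m}{k}(v_{(k)}w)_{(m+n-k)}, \]
and modding out by the relation $(Tv)_{(n)} = -n\, v_{(n-1)}$; both ingredients survive the regrading unchanged. The relabelling also respects fermion number, so super-commutator signs agree on the two sides.

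The only potential obstacle is the risk that some feature of $Lie(V)$ — for instance a grading or a filtration one wishes to retain — might secretly be pinned to the Virasoro; a direct inspection of the definition dispels this worry, and so the identity map descends to the required isomorphism of sheaves of Lie algebras.
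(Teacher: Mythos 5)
Your argument does establish the statement as literally written, but it is not the paper's map, and the difference is worth spelling out. The paper's proof does \emph{not} take the identity: it sends $x_{(i)}, y_{(i)}$ to themselves but shifts the fermionic mode indices, $\bar{\phi}_{(j)}\mapsto \phi_{(j-1)}$ and $\bar{\psi}_{(i)}\mapsto \psi_{(i+1)}$ (on fields, $\bar{\phi}(z)\mapsto z^{-1}\phi(z)$, $\bar{\psi}(z)\mapsto z\,\psi(z)$). The point of the shift is precisely the grading you wave away at the end: since $\bar{\phi}$ has weight $1$ in $\Theta^{ch}_{M}$ while $\phi$ has weight $0$ in $\Omega^{ch}_{M}$ (and dually for $\psi$), the identity map sends the weight-$(-j)$ element $\bar{\phi}_{(j)}$ to an element of weight $-j-1$, so it does not intertwine the two conformal gradings; the shifted map does, mode by mode. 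This matters downstream: the contraction operators $\iota^{v}_{j}$ and Lie derivatives $\mathcal{L}^{v}_{j}$ are defined as modes of this action, and the claims that $\mathcal{L}^{ch}_{\pi}$ preserves conformal weight, restricts on the weight-$0$ subspace to Brylinski's complex, and is realized as a $(1)$-mode such as $(y^{1}_{1}\psi^{2}_{1}-y^{2}_{1}\psi^{1}_{1})_{(1)}$ in the Brylinski-type theorem, all presuppose the weight-compatible identification; with your identity map the action of a weight-$0$ polyvector's distinguished mode would shift conformal weight by its fermion number, and the weight-$0$ specializations would come out misindexed. So: your route is fine for the bare existence statement (and your observation that the MSV gluing is insensitive to the regrading is indeed the reason the paper may treat $\Theta^{ch}_{M}$ as the same underlying sheaf), but the content of the paper's lemma is the choice of a \emph{graded} isomorphism, which is exactly the place where "direct inspection dispels this worry" is too quick --- the worry is real and is resolved by the index shift, not by ignoring it. Checking that the shifted assignment is still a Lie algebra map is routine (the $z^{\pm 1}$ rescalings of the fermionic fields cancel in the singular parts of the relevant OPEs and commute with the tensorial transition formulas), and that is the check the paper's "can easily be checked" refers to.
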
 \begin{proof}Dispensing of superscripts to unburden notation, and working formally locally, we send the modes $x_{(i)}, y_{(i)}\in Lie(\Theta^{ch}_{M})$ to the identically denoted elements of $Lie(\Omega^{ch}_{M})$. We stipulate further $\bar{\phi}_{(j)}\mapsto \phi_{(j-1)}$ and $\bar{\psi}_{(i)}\mapsto \psi_{(i+1)}$. This can easily be checked to extend to an isomorphism of Lie algebras. \end{proof}

\begin{corollary} $\Omega^{ch}_{M}$ is a module for the sheaf of vertex algebras $\Theta^{ch}_{M}$. \end{corollary}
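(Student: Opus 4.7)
The approach is to invoke a reconstruction theorem for modules over vertex (super)algebras, taking the Lie algebra isomorphism of the previous lemma as the essential input. It suffices to exhibit, for each generator $v$ of $\Theta^{ch}_M$ in \'etale local coordinates, a field $Y^M(v,z)$ of operators on $\Omega^{ch}_M$, and to check that these generating fields are pairwise local, have the correct singular OPEs (matching those of the corresponding fields on $\Theta^{ch}_M$), and are compatible with the translation operator $T$. Granted this data, reconstruction produces a canonical module structure $\Theta^{ch}_M \otimes \Omega^{ch}_M \to \Omega^{ch}_M((z))$.

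For the bosonic generators $x^i_0$ and $y^i_1$ one takes the fields already present in $\Omega^{ch}_M$. For the fermionic generators the conformal re-grading of $\Theta^{ch}_M$ forces a one-step shift in the mode expansion, exactly as recorded by the rules $\bar{\phi}_{(j)} \mapsto \phi_{(j-1)}$ and $\bar{\psi}_{(i)} \mapsto \psi_{(i+1)}$ from the previous lemma. Unpacking these gives
\[ Y^M(\bar{\phi}^i_1, z) = z^{-1}\, Y^{\Omega}(\phi^i, z), \qquad Y^M(\bar{\psi}^i_0, z) = z\, Y^{\Omega}(\psi^i, z), \]
and the lower truncation condition on these expressions, viewed as operators on $\Omega^{ch}_M$, is inherited directly from that of the underlying fields on $\Omega^{ch}_M$. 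Locality and the correct singular OPEs among generating fields are then immediate from the previous lemma, since commutators of modes are precisely the singular OPE coefficients, and compatibility with $T$ holds because $T$ lies in both Lie algebras and corresponds to itself under the identification.

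The main point needing care is that the fermionic prefactors $z^{\pm 1}$ interact correctly with the normally ordered products when reconstruction extends the action from the generating fields to the whole of $\Theta^{ch}_M$; this is precisely what the conformal re-grading of $\Theta^{ch}_M$ was arranged to enforce, and the matching of conformal weights on both sides guarantees it. Since the whole construction is \'etale local and $\mathrm{Aut}(\Delta^D)$-equivariant by the formulae of \cite{MSV}, the local module structure glues to give the desired sheaf-level statement.
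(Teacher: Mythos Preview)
Your argument is correct and amounts to a careful fleshing-out of the paper's approach: the paper states this corollary without proof, as an immediate consequence of the preceding Lie algebra isomorphism $Lie(\Theta^{ch}_{M})\cong Lie(\Omega^{ch}_{M})$, the point being that $\Omega^{ch}_{M}$ is tautologically a smooth module for $Lie(\Omega^{ch}_{M})$ and hence, via the isomorphism, for $Lie(\Theta^{ch}_{M})$. Your use of the reconstruction theorem for modules simply makes explicit the standard passage from a smooth $Lie(V)$-action (with lower truncation) to a $V$-module structure.
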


\begin{definition} For $v\in\Theta^{ch}_{M}$ a local section, the contraction operators $\iota^{v}_{j}$ are defined to be the $j$-modes of the action of $\Theta^{ch}_{M}$ on $\Omega^{ch}_{M}$ constructed above.  The Lie derivative $\mathcal{L}^{v}_{j}$ is \emph{defined} to be $\big[d^{ch}_{dR},\iota^{v}_{j-1}\big]$.\end{definition}

Now recall that in the data of a calculus there is a homotopy relating contractions to Lie derivatives, one can adapt this definition to the vertex context in an evident manner.  \begin{definition} A tuple consisting of a vertex $\mathcal{G}$-algebra $V$, a module $M$ for it, and an odd square zero derivation $\partial$ satisfying the following \emph{Cartan formula}, for all $v\in V,j\in\mathbb{Z}$, $$\mathcal{L}^{v}_{j}=\big[\partial,\,\iota^{v}_{j-1}\,\big],$$ is said to be a \emph{vertex calculus}. \end{definition}

\begin{remark} If $\eta$ is a vector field on $M$, given in local coordinates as $\eta_{i}\partial^{i}$, one finds that the $0$ Lie derivative action corresponding to $\eta$ is precisely the residue $$(\eta_{i}y^{i}_{1}+\partial_{j}\eta_{i}\phi^{j}_{0}\psi^{i}_{1})_{(0)},$$ wherein one sees (the infinitesimal form of) the famous observation that fermions \emph{cancel the anomaly}, cf. \cite{MSV}. \end{remark}

The above discussion is summarized as follows, the reader may either assume $M$ is Calabi-Yau or else reformulate the following as a statement about gerbes; \begin{lemma} The chiral de Rham differential $d^{ch}_{dR}$ endows the module $\Theta^{ch}_{M}$-module $\Omega^{ch}_{M}$ with the structure of a sheaf of vertex calculi . \end{lemma}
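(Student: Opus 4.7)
The plan is to reduce the axioms of a vertex calculus to the vertex $\mathcal{G}$-algebra axioms on $\Theta^{ch}_M$ (established in the preceding lemma) together with three facts already in hand: $d^{ch}_{dR}$ is an odd square-zero derivation of the vertex algebra $\Omega^{ch}_M$, hence of its $\Theta^{ch}_M$-module structure; $(d^{ch}_{dR})^2 = 0$; and under the isomorphism $Lie(\Theta^{ch}_M)\cong Lie(\Omega^{ch}_M)$ the operator $\bar Q_{(0)}$ corresponds precisely to $d^{ch}_{dR}$, since both are the $0$-mode of the residue $y^i_1 \phi^i_1$ after the conformal regrading of the $\phi$-fields. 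With these in hand, the Cartan formula $\mathcal{L}^v_j = [d^{ch}_{dR},\iota^v_{j-1}]$ holds tautologically by our definition of the Lie derivatives.

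The non-trivial content is then verifying the module-theoretic analogues of the Lie$^*$ and derivation identities in the definition of a vertex $\mathcal{G}$-algebra. The cornerstone is the mixed identity
\[
[\mathcal{L}^v_i,\iota^w_{j-1}] = \sum_{k\geq 0}\binom{i}{k}\iota^{v_{\{k\}}w}_{i+j-k-1},
\]
which I would establish by transporting the Borcherds-Jacobi computation used to prove the $\{i\}$-bracket identities on $\Theta^{ch}_M$: namely, expand $\mathcal{L}^v_i = [d^{ch}_{dR},\iota^v_{i-1}]$, use the fact that $d^{ch}_{dR}$ is a derivation of the $(j)$-products on $\Omega^{ch}_M$, and then recognise the resulting sum as the image of $(\bar Q_{(0)} v)_{(i)}w$ under contractions, courtesy of the identification $\bar Q_{(0)}\leftrightarrow d^{ch}_{dR}$.

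Given this intermediate identity, the Lie-derivative commutator follows formally. Since $(d^{ch}_{dR})^2=0$ implies $[d^{ch}_{dR},\mathcal{L}^v_i]=0$, super-Jacobi yields
\[
[\mathcal{L}^v_i,\mathcal{L}^w_j] = [\mathcal{L}^v_i,[d^{ch}_{dR},\iota^w_{j-1}]] = [d^{ch}_{dR},[\mathcal{L}^v_i,\iota^w_{j-1}]],
\]
and substituting the mixed identity and applying Cartan term-by-term produces the required
\[
[\mathcal{L}^v_i,\mathcal{L}^w_j] = \sum_{k\geq 0}\binom{i}{k}\mathcal{L}^{v_{\{k\}}w}_{i+j-k}.
\]
The remaining axioms (compatibility of $\iota$ and $\mathcal{L}$ with the vertex $(j)$-products on $\Theta^{ch}_M$) are handled by the same pattern, either by direct Borcherds-Jacobi or by commuting with $d^{ch}_{dR}$.

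The main obstacle is purely bookkeeping: one must check that the Borcherds-Jacobi identity applied inside the module $\Omega^{ch}_M$ generates the same combinatorics of binomial coefficients as for $\Theta^{ch}_M$ itself, and that the shifts in conformal weight encoded in the rule $\bar\phi_{(j)}\mapsto\phi_{(j-1)}$, $\bar\psi_{(i)}\mapsto\psi_{(i+1)}$ line up with the shift $j\mapsto j-1$ built into the Cartan formula. Once this is done étale-locally, globalization is automatic since both $\bar Q_{(0)}$ and $d^{ch}_{dR}$ are $Aut(\Delta^D)$-invariant, so the vertex calculus structure descends to a sheaf on any smooth $M$ (in the Calabi-Yau case strictly, as a gerbe otherwise).
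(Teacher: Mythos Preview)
Your proposal is correct and follows the same approach as the paper: the Cartan formula holds by definition, so the content lies in verifying that $\iota$ and $\mathcal{L}$ satisfy the representation axioms of a vertex $\mathcal{G}$-algebra, which you (like the paper) reduce to Borcherds--Jacobi identities transported through the module structure. The paper's proof is essentially a one-line pointer (``can be checked easily from the definitions''), whereas you spell out the key intermediate mixed identity $[\mathcal{L}^v_i,\iota^w_{j-1}]=\sum_k\binom{i}{k}\iota^{v_{\{k\}}w}_{i+j-k-1}$, the super-Jacobi derivation of the $[\mathcal{L},\mathcal{L}]$ identity from it, and the observation that $\bar Q_{(0)}\leftrightarrow d^{ch}_{dR}$ under the Lie algebra isomorphism; this last point is what makes the bookkeeping of mode shifts transparent and is a genuinely helpful elaboration not made explicit in the paper.
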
 \begin{proof} Once it is shown that the operators satisfy the axioms of a representation of a vertex $\mathcal{G}$-algebra, it will follow by construction that $d^{ch}_{dR}$ endows this with the structure of a calculus. That the $\iota$ and $\mathcal{L}$ operators satisfy the requisite axioms can be checked easily from the definitions. \end{proof}

\begin{corollary} The operator $\big[d^{ch}_{dR},\,\iota^{\pi}_{0}\,\big]=\mathcal{L}^{\pi}_{1}$, denoted henceforth $\mathcal{L}^{ch}_{\pi}$ and referred to as the \emph{chiral Poisson differential}, is of square zero and cohomological degree $-1$. The conformal weight $0$ subspace reproduces the usual Poisson homology complex of \cite{Bry}. There is an additional differential, given by $d^{ch}_{dR}$, which commutes with $\mathcal{L}^{ch}_{\pi}$.\end{corollary}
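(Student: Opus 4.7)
The equality $[d^{ch}_{dR},\iota^{\pi}_{0}]=\mathcal{L}^{\pi}_{1}$ is true by definition, so the plan reduces to four verifications. The cohomological degree count is immediate: the chiral de Rham differential raises fermion number by one, the bivector $\pi$ has fermion number two so $\iota^{\pi}_{0}$ lowers it by two, and the commutator therefore has degree $-1$. The conformal weight zero claim is equally cheap: both $d^{ch}_{dR}$ and $\iota^{\pi}_{0}$ restrict on weight zero to the classical de Rham differential and the classical contraction with $\pi$ respectively, so their supercommutator restricts to the classical Cartan formula $[d_{dR},\iota_{\pi}]=\mathcal{L}_{\pi}$ of \cite{Bry}.

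For the commutation with $d^{ch}_{dR}$, the plan is to use the graded Jacobi identity together with $(d^{ch}_{dR})^{2}=0$. Writing out $\bigl[d^{ch}_{dR},[d^{ch}_{dR},\iota^{\pi}_{0}]\bigr]$ and using that $d^{ch}_{dR}$ is odd gives $\tfrac{1}{2}\bigl[[d^{ch}_{dR},d^{ch}_{dR}],\iota^{\pi}_{0}\bigr]=0$, which is the desired statement.

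The only nontrivial point is the square-zero property. Here I would invoke the vertex $\mathcal{G}$-algebra Lie${}^{*}$ identity for Lie derivatives established in the previous lemma, namely
\[
\bigl[\mathcal{L}^{v}_{i},\mathcal{L}^{w}_{j}\bigr]=\sum_{k\geq 0}\binom{i}{k}\mathcal{L}^{v_{\{k\}}w}_{i+j-k},
\]
and apply it with $v=w=\pi$ and $i=j=1$. Since $\mathcal{L}^{\pi}_{1}$ is odd, its square equals $\tfrac{1}{2}[\mathcal{L}^{\pi}_{1},\mathcal{L}^{\pi}_{1}]$, and only $k=0,1$ contribute. The $k=0$ term is $\mathcal{L}^{\pi_{\{0\}}\pi}_{2}$, and by the earlier lemma $\pi_{\{0\}}\pi$ restricts in conformal weight zero to the classical Gerstenhaber bracket $[\pi,\pi]$, which vanishes by the Poisson hypothesis; since $\pi$ itself lies in conformal weight zero and $\{0\}$ preserves weight zero, this is the whole of $\pi_{\{0\}}\pi$. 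The $k=1$ term is $\mathcal{L}^{\pi_{\{1\}}\pi}_{1}$, and $\pi_{\{1\}}\pi$ vanishes automatically for weight reasons: $\pi$ has conformal weight zero and $\{1\}$ has conformal weight $-1$, so the putative output sits in weight $-1$, which is absent from $\Theta^{ch}_{M}$.

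The only place any real work is concealed is in the Lie${}^{*}$ identity for the operators $\mathcal{L}^{v}_{i}$ itself, which was asserted (but not fully verified) in the preceding lemma by invoking the vertex $\mathcal{G}$-algebra representation axioms; once that is granted, the corollary follows by the accounting above. In particular there is no need to manipulate the explicit local formula for $\mathcal{L}^{ch}_{\pi}$ or to touch the Borcherds identities directly.
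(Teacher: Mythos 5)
Your proposal is correct and follows essentially the route the paper intends: the corollary is left as a formal consequence of the vertex calculus lemma, and your accounting (degree count, restriction to conformal weight $0$, Jacobi with $(d^{ch}_{dR})^{2}=0$, and square-zero via the Lie$^{*}$ identity $[\mathcal{L}^{\pi}_{1},\mathcal{L}^{\pi}_{1}]=\mathcal{L}^{\pi_{\{0\}}\pi}_{2}+\mathcal{L}^{\pi_{\{1\}}\pi}_{1}$ with $\pi_{\{0\}}\pi=[\pi,\pi]=0$ and $\pi_{\{1\}}\pi=0$ for weight reasons) is exactly the natural fleshing-out, mirroring the paper's own proof that $\partial^{ch}_{\pi}=\pi_{\{0\}}$ squares to zero. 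The only caveat, which is a looseness of the paper rather than of your argument, is the mode-indexing convention for $\iota^{\pi}_{j}$, on which the claim that $\iota^{\pi}_{0}$ (and hence $\mathcal{L}^{\pi}_{1}$) preserves conformal weight and restricts to the classical contraction on the weight-$0$ subspace implicitly depends.
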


The following definition-lemma summarizes the above discussion; \begin{definition} The hypercohomology $$H_{\pi}^{ch}(M):=H^{*}(M,(\Theta^{ch}_{M},\partial^{ch}_{\pi})),$$ will be referred to as \emph{chiral Poisson cohomology}. The hypercohomology $$H^{\pi}_{ch}(M):=H^{*}(M,(\Omega^{ch}_{M},\mathcal{L}^{ch}_{M})),$$ will be referred to as \emph{chiral Poisson homology}. \end{definition}

\begin{lemma}The formulae defined above endow the triple $(H^{ch}_{\pi}(M),H^{\pi}_{ch}(M),d^{ch}_{dR})$ with the structure of a vertex calculus.\end{lemma}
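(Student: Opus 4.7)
The plan is to lift the statement to a claim about sheaves of complexes: namely, that the tuple
$$((\Theta^{ch}_{M},\partial^{ch}_{\pi}),(\Omega^{ch}_{M},\mathcal{L}^{ch}_{\pi}),d^{ch}_{dR})$$
is a sheaf of dg vertex calculi in the sense that $\partial^{ch}_{\pi}$ is a square-zero derivation of the full vertex $\mathcal{G}$-algebra structure on $\Theta^{ch}_{M}$, that $\mathcal{L}^{ch}_{\pi}$ is a square-zero endomorphism of $\Omega^{ch}_{M}$ compatible with the module structure in the sense that
$$\big[\mathcal{L}^{ch}_{\pi},\iota^{v}_{j}\big]=\iota^{\partial^{ch}_{\pi}v}_{j},\qquad \big[\mathcal{L}^{ch}_{\pi},\mathcal{L}^{v}_{j}\big]=\mathcal{L}^{\partial^{ch}_{\pi}v}_{j},$$
and that $d^{ch}_{dR}$ commutes with $\mathcal{L}^{ch}_{\pi}$. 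Once established on the level of complexes, the vertex calculus structure descends to hypercohomology tautologically.

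First I would verify that $\partial^{ch}_{\pi}=\pi_{\{0\}}$ is a derivation of every $(j)$- and $\{j\}$-product on $\Theta^{ch}_{M}$. This is immediate from the axioms of a vertex $\mathcal{G}$-algebra already recorded in the text: specialising the Lie$^{*}$ identity and the generalized derivation property to $i=0$ collapses the binomial sum to its $k=0$ term, giving
$$\big[\pi_{\{0\}},w_{\{j\}}\big]=(\pi_{\{0\}}w)_{\{j\}},\qquad \big[\pi_{\{0\}},w_{(j)}\big]=(\pi_{\{0\}}w)_{(j)}.$$
Combined with the already proved $(\partial^{ch}_{\pi})^{2}=0$ this promotes $\Theta^{ch}_{M}$ to a dg vertex $\mathcal{G}$-algebra.

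Next I would verify that $\mathcal{L}^{ch}_{\pi}$ intertwines the module operations correctly, i.e.\ the two displayed formulas above. The first is a direct computation using $\mathcal{L}^{ch}_{\pi}=[d^{ch}_{dR},\iota^{\pi}_{0}]$ and the super Jacobi identity:
$$\big[\mathcal{L}^{ch}_{\pi},\iota^{v}_{j}\big]=\big[d^{ch}_{dR},[\iota^{\pi}_{0},\iota^{v}_{j}]\big]-\big[\iota^{\pi}_{0},[d^{ch}_{dR},\iota^{v}_{j}]\big],$$
in which the first term is rewritten using the module analogue of the Lie$^{*}$-axiom (the bracket of two $\iota$'s produces $\iota^{\pi_{\{k\}}v}_{\cdot}$ terms), and the second is rewritten using the Cartan formula $\mathcal{L}^{v}_{j+1}=[d^{ch}_{dR},\iota^{v}_{j}]$; the terms not contributing a factor of $\pi_{\{0\}}v$ cancel because $\pi_{\{0\}}\pi=0$ (already used in the square-zero computation). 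The second intertwining follows formally by bracketing with $d^{ch}_{dR}$. The compatibility $[d^{ch}_{dR},\mathcal{L}^{ch}_{\pi}]=0$ is then immediate from $(d^{ch}_{dR})^{2}=0$.

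Having assembled these chain-level identities, the descent to hypercohomology is formal: all the structural operations of the vertex calculus on $(\Theta^{ch}_{M},\Omega^{ch}_{M})$ are (super-)chain maps with respect to $(\partial^{ch}_{\pi},\mathcal{L}^{ch}_{\pi})$, and they commute with $d^{ch}_{dR}$, so they induce operations on $(H^{ch}_{\pi}(M),H^{\pi}_{ch}(M))$ satisfying the same quadratic identities; Cartan's relation passes to cohomology in the same way. The main obstacle is the computation in Step 2, where some bookkeeping with signs and with the binomial identities for $[\iota^{v}_{i},\iota^{w}_{j}]$ is needed to isolate the single surviving term $\iota^{\pi_{\{0\}}v}_{j}$; every other contribution must be shown to cancel by invoking $\pi_{\{0\}}\pi=0$ together with the module axioms.
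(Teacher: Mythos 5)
Your proposal is correct and is essentially the paper's argument: the paper's proof is the one-line observation that all the structure has already been built at the sheaf (chain) level and therefore descends formally to hypercohomology, and your write-up simply makes explicit the chain-level compatibilities (derivation property of $\pi_{\{0\}}$, intertwining of $\mathcal{L}^{ch}_{\pi}$ with the $\iota$ and $\mathcal{L}$ operations, commutation with $d^{ch}_{dR}$) that justify that descent. No substantive difference in route, just a more detailed bookkeeping of the same formal descent.
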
 \begin{proof} Above we have constructed everything on the sheaf level, it formally descends to the level of hypercohomology. \end{proof}\section{ Chiral Poisson (co)homology} \subsection{General Results} We now turn to the task of proving the expected basic theorems concerning chiral Poisson (co)homology. As was mentioned in the introduction Poisson (co)homology is a somewhat subtle invariant of a Poisson variety $(M,\pi)$ and as such we of course cannot expect to compute its chiral analogue too easily, as this chiral analogue is \emph{at least} as intractable as the classical version. 

A benefit to the somewhat lengthy discussion above is that the basic expected properties of chiral Poisson (co)homology can now be verified quite easily. We begin with the identification of the fixed points of the $S^{1}$-action.  \begin{lemma} There is an isomorphism $$H^{\pi,S^{1}}_{ch}(M):=H^{*}(M,\big(\Omega^{ch}_{M}((u)),ud^{ch}_{dR}+\mathcal{L}^{ch}_{\pi}\big))\cong H^{*}_{dR}(M)((u)),$$ in particular there are no non-zero classes of strictly positive conformal weight. \end{lemma}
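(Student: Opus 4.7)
My approach is to chiralize Brylinski's conjugation trick from the second preliminary lemma, exhibiting an automorphism $U$ of $\Omega^{ch}_{M}((u))$ that intertwines
$$U\cdot(ud^{ch}_{dR})\cdot U^{-1} \;=\; ud^{ch}_{dR} + \mathcal{L}^{ch}_{\pi},$$
and then invoking the Malikov--Schechtman--Vaintrob quasi-isomorphism for the chiral de Rham complex. Guided by the classical identity $e^{\iota_{\pi}}d_{dR}e^{-\iota_{\pi}}=d_{dR}+\mathcal{L}_{\pi}$ and the chiral Cartan relation $[d^{ch}_{dR},\iota^{\pi}_{0}]=\mathcal{L}^{ch}_{\pi}$ established in the previous section, the natural candidate is
$$U \;:=\; \exp\!\bigl(u^{-1}\,\iota^{\pi}_{0}\bigr).$$

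Two things must be verified. First, that $U$ defines an automorphism: the factor $u^{-1}$ is permissible in the Laurent extension $((u))$, while $\iota^{\pi}_{0}$ is of conformal weight zero and strictly lowers cohomological degree, hence is locally nilpotent on each conformal weight subsheaf (bounded above in form-degree because only finitely many fermionic generators appear below a given weight), so the exponential converges termwise. Second, and more delicately, that in the expansion $e^{\mathrm{ad}_{X}}Y = Y + [X,Y] + \tfrac{1}{2!}[X,[X,Y]]+\cdots$ with $X = u^{-1}\iota^{\pi}_{0}$ and $Y = ud^{ch}_{dR}$, all terms beyond order one vanish. The zeroth and first-order contributions recover $ud^{ch}_{dR}+\mathcal{L}^{ch}_{\pi}$ by the chiral Cartan formula (the factors $u^{\pm 1}$ cancel since they are central). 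The next term is $\tfrac{1}{2}u^{-1}[\iota^{\pi}_{0},\mathcal{L}^{ch}_{\pi}]$, which classically equals $\iota_{[\pi,\pi]}$ up to a sign and vanishes by the Poisson condition. The chiral analog should follow from the vertex $\mathcal{G}$-algebra axioms for the representation of $\Theta^{ch}_{M}$ on $\Omega^{ch}_{M}$, combined with the identity $(\pi_{\{0\}}\pi)_{\{0\}}=0$ established in the corollary to the Gerstenhaber bracket lemma; I expect a Borcherds--Jacobi manipulation to reduce the iterated commutators to contraction and Lie operators indexed by $\pi_{\{0\}}\pi$, which then act trivially on the mixed complex. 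This verification is the main obstacle.

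Granting the intertwining, we obtain an isomorphism of complexes identifying the given hypercohomology with $\mathbb{H}^{*}\bigl(M,(\Omega^{ch}_{M},d^{ch}_{dR})\bigr)((u))$, since $u$ is a central formal variable commuting with $d^{ch}_{dR}$. By the MSV quasi-isomorphism, the $d^{ch}_{dR}$-cohomology of $\Omega^{ch}_{M}$ reduces to the ordinary de Rham complex, giving $H^{*}_{dR}(M)((u))$. All operators involved (in particular $\iota^{\pi}_{0}$, $d^{ch}_{dR}$, and multiplication by $u$) are of conformal weight zero, so the isomorphism respects the conformal grading; since ordinary de Rham cohomology is concentrated in weight zero, the second assertion — vanishing of all classes of strictly positive conformal weight — follows at once.
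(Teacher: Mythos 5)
Your proposal is correct and is essentially the paper's own argument: conjugate $ud^{ch}_{dR}+\mathcal{L}^{ch}_{\pi}$ by $\exp\bigl(u^{-1}\iota^{\pi}_{0}\bigr)$ (well defined by local nilpotence of $\iota^{\pi}_{0}$ on each fixed conformal weight piece, where the cohomological degree is bounded) back to $ud^{ch}_{dR}$, then invoke the MSV computation of chiral de Rham cohomology, which is concentrated in conformal weight $0$. The step you flag as ``the main obstacle'' does close exactly as you expect, and is the only point the paper itself leaves implicit: the generalized derivation axiom for the representation gives $\big[\mathcal{L}^{\pi}_{1},\iota^{\pi}_{0}\big]=\iota^{\pi_{\{0\}}\pi}_{1}+\iota^{\pi_{\{1\}}\pi}_{0}$, and both terms vanish ($\pi_{\{0\}}\pi=0$ by the Poisson condition, $\pi_{\{1\}}\pi=0$ since it would have negative conformal weight), so the conjugation series terminates at first order.
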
 
\begin{proof} Recall that $\mathcal{L}^{ch}_{\pi}$ satisfies $\big[d^{ch}_{dR},\iota^{\pi}_{0}\big]$. Now note that $\iota^{\pi}_{0}$ is of even cohomological degree $-2$ and further is locally nilpotent, as the cohomological degree is bounded below on each fixed conformal weight piece of $\Omega^{ch}_{M}$. It follows that $$\exp\big(\frac{\iota^{\pi}_{0}}{u}\big)$$ is a well-defined operator on this $2$-periodic complex. This conjugates the differential, $$d^{ch}_{dR}+u^{-1}\mathcal{L}^{ch}_{\pi},$$ to the usual chiral de Rham differential. One then applies the results of \cite{MSV} to note that the resulting hypercohomology is simply $H_{dR}(M)((u))$ in conformal weight $0$. \end{proof}

We now prove the analogue of the theorem of Brylinksi (\cite{Bry}) showing that these invariants are really invariants of the singularities of the form $\pi$, which is to say that one obtains nothing of interest when $\pi$ is non-degenerate.\begin{theorem} If $\pi$ is non-degenerate, then there is an isomorphism $H^{\pi}_{ch}(M)\cong H^{d-*}_{dR}(M)$, placed in conformal weight $0$, where $d=2n$ is the dimension of $M$. A similar result holds for Poisson cohomology. In particular there are no classes of non-zero conformal weight. \end{theorem}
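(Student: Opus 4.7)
The plan is to adapt Brylinski's classical argument to the chiral setting: construct a chiral analog of the symplectic Hodge star that intertwines $\mathcal{L}^{ch}_\pi$ with $d^{ch}_{dR}$ on $\Omega^{ch}_M$, and similarly relate the polyvector differential $\partial^{ch}_\pi$ to $d^{ch}_{dR}$. Given such intertwiners, the theorem would follow immediately, since by \cite{MSV} the hypercohomology of $(\Omega^{ch}_M,d^{ch}_{dR})$ is $H^*_{dR}(M)$, already concentrated in conformal weight $0$.

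Concretely, the non-degeneracy of $\pi$ gives the classical isomorphism $\pi^{\sharp}:\Omega_M\cong\Theta_M$, and the first step would be to lift this to a sheaf isomorphism $\Omega^{ch}_M\cong\Theta^{ch}_M$. Étale locally one can prescribe its action on the generators $x,y,\bar{\phi},\bar{\psi}$ using the formulae of \cite{MSV} and extend by the vertex structure; verifying that the result globalizes will require checking invariance under coordinate changes as in that reference. The key identity to establish is then $\pi^{\sharp}\circ\mathcal{L}^{ch}_\pi=\partial^{ch}_\pi\circ\pi^{\sharp}$, which via the Cartan formula $\mathcal{L}^{ch}_\pi=[d^{ch}_{dR},\iota^\pi_0]$ and the vertex $\mathcal{G}$-algebra identities developed in Section 2 reduces to combinatorial identities among the Borcherds modes of $\pi$ and the dual symplectic form $\omega$. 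The essential inputs are that $\pi$ is a chiral Poisson form and that $\omega$ is closed. Composing with the classical symplectic Hodge star on the weight-$0$ layer then gives the Brylinski-type chain map, from which the result for $H^\pi_{ch}(M)$ follows; the cohomology statement is symmetric, using the inverse $\omega^{\sharp}$.

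The main obstacle is precisely this chiral lift of $\pi^{\sharp}$: unlike the classical sheaves, which are finitely generated in each degree, $\Omega^{ch}_M$ and $\Theta^{ch}_M$ have infinitely many generators at positive conformal weight (the $y^i_k$ and the $\partial$-derivatives of the basic generators), so a global intertwiner must respect the full vertex structure. In the non-degenerate case $M$ is automatically Calabi-Yau, so one may hope to leverage the $\mathcal{N}=2$ symmetry of $\Omega^{ch}_M$ and the chiral volume form it provides to build the required Hodge-type map in a coordinate-free way. If a direct construction proves stubborn, a fallback is a spectral sequence argument: the $u$-adic filtration on the totalization produces a sequence with $E_1$-page $H^{\pi}_{ch}(M)((u))$ abutting to $H^{\pi,S^{1}}_{ch}(M)=H^*_{dR}(M)((u))$ by the preceding lemma. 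Since the abutment vanishes in strictly positive conformal weight, it would suffice to prove degeneration of this sequence in the non-degenerate case, using the classical Brylinski theorem in conformal weight $0$ as base of an inductive comparison.
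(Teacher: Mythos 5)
There is a genuine gap: both routes you sketch stop exactly where the theorem actually lives. In your main approach, the chiral lift of the symplectic star (or of $\pi^{\sharp}$ as a cochain map intertwining $\mathcal{L}^{ch}_{\pi}$ with $d^{ch}_{dR}$, or with $\partial^{ch}_{\pi}$) is never constructed --- you yourself flag it as ``the main obstacle'' --- and the proposed patch of ``composing with the classical symplectic Hodge star on the weight-$0$ layer'' is not well-defined on the positive conformal weight part, which is precisely where the vanishing statement has content. Moreover an intertwiner relating $\mathcal{L}^{ch}_{\pi}$ to $\partial^{ch}_{\pi}$ only compares homology with cohomology; it computes neither. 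There is also a hidden normalization issue: Darboux coordinates for $\pi$ exist only formally, not \'etale locally (the paper's example $d\log(x^{1})d\log(x^{2})$ on $\mathbb{C}^{*}\times\mathbb{C}^{*}$), so any local construction adapted to the symplectic form must be done on the formal disc and descended via Gelfand--Kazhdan formal geometry, a step absent from your outline. The fallback is worse: the identification $H^{\pi,S^{1}}_{ch}(M)\cong H^{*}_{dR}(M)((u))$ holds for \emph{every} Poisson $\pi$, including degenerate ones for which positive-weight chiral Poisson homology is nonzero (the paper's own example $\pi=x^{2}\partial_{1}\partial_{2}$ on $\mathbb{C}^{2}$ has a class in conformal weight $2$); hence the $u$-filtration spectral sequence does \emph{not} degenerate in general, and degeneration for non-degenerate $\pi$ is logically equivalent to the vanishing you are trying to prove. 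Your ``inductive comparison'' starting from the weight-$0$ Brylinski isomorphism gives no mechanism by which non-degeneracy enters in positive conformal weight, so the argument is circular.

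For comparison, the paper's proof avoids any chiral Hodge star. It uses formal Darboux coordinates and Gelfand--Kazhdan descent to reduce to the standard symplectic formal disc (and then to $n=1$), and there exhibits an explicit odd vector $H=x^{1}_{1}\phi^{2}_{0}-x^{2}_{1}\phi^{1}_{0}$ whose zero mode satisfies
\begin{equation*}
\big[H_{(0)},\,\mathcal{L}^{ch}_{\omega^{-1}}\,\big]=L_{(1)},
\end{equation*}
the diagonalisable conformal-grading operator. Thus $L_{(1)}$ is nullhomotopic for the chiral Poisson differential, killing all cohomology in positive conformal weight, and the weight-$0$ layer is handled by the classical Brylinski theorem, giving $H^{d-*}_{dR}(M)$. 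Note that this is exactly where non-degeneracy is used: such an $H$ exists only after putting $\pi$ in standard form. If you want to salvage your plan, the homotopy-against-the-grading-operator mechanism (rather than an intertwiner or a degeneration claim) is the missing idea.
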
\begin{proof} Let $\omega$ be the symplectic form dual to $\pi$. If $x$ is a point of $M$ then there are formal coordinates around $x$ where $\pi$ is in standard Darboux form. (We caution the reader that this is not true in the \'{e}tale topology, indeed it fails for the form $d\log(x^{1})d\log(x^{2})$ on $\mathbb{C}^{*}\times\mathbb{C}^{*}$.) 

Now, the machinery of Gelfand-Kazhdan formal geometry (cf. \cite{MSV} for an introduction) implies that there is an associated torsor $\widehat{M}\rightarrow M$ for the (pro-) group scheme $Symp(\Delta^{2n},\omega_{std})$ of formal symplectomorphisms of the $2n$-disc with its standard symplectic form. The group $Symp(\Delta^{2n},\omega_{std})$ acts on $(\Omega^{ch}_{\Delta^{2n}},\mathcal{L}^{ch}_{\pi^{-1}})$ and $(\Omega^{ch}_{M},\mathcal{L}^{ch}_{\pi})$ is obtained by reduction along this torsor. Recalling that Brylinksi's result computes the conformal weight $0$ subsapce, we are thus reduced to showing that the inclusion of the weight $0$ subspace $$(\Omega_{\Delta^{2n}},\mathcal{L}_{\omega^{-1}})\longrightarrow (\Omega^{ch}_{\Delta^{2n}},\mathcal{L}^{ch}_{\omega^{-1}}),$$ is a quasi-isomorphism.

It suffices to handle the case of $n=1$.  Now let $x^{1},x^{2}$ be coordinates on $\Delta^{2}$. Consider the vector $$H:=x^{1}_{1}\phi^{2}_{0}-x^{2}_{1}\phi^{1}_{0}\in\Omega^{ch}_{\Delta^{2}}.$$ This has conformal weight $1$ and cohomological degree $1$. Observe now the following simple identity $$H_{(0)}(y^{1}_{1}\psi^{2}_{1}-y^{2}_{1}\psi^{1}_{1})=x^{i}_{1}y^{i}_{1} +\phi^{i}_{1}\psi^{i}_{1},$$ where of course we recognise $x^{i}_{1}y^{i}_{1}+\phi^{i}_{1}\psi^{1}_{1}$ as the Virasoro vector $L$. Let us note further that we have, by construction of $\mathcal{L}^{ch}_{\omega^{-1}}$, that $$(y^{1}_{1}\psi^{2}_{1}-y^{2}_{1}\psi^{1}_{1})_{(1)}=\mathcal{L}^{ch}_{\omega^{-1}}.$$ Now $H_{(0)}$ acts as a derivation with respect to all $(j)$-products whence we compute $$\big[H_{(0)},\,\mathcal{L}^{ch}_{\omega^{-1}}\,\big]=L_{(1)},$$ so that $L_{(1)}$ acts trivially on cohomology. Now this operator is simply the (diagonalisable) grading operator for the conformal grading, whence the theorem is established. 
 \end{proof}

\begin{remark} When $M$ is Calabi-Yau such that the associated volume form $vol_{M}$ is \emph{compatible} with $\pi$ in the sense that $\mathcal{L}_{\pi}(vol_{M})=0$, then the $\mathcal{N}=2$ algebra acts on $\Omega^{ch}_{M}$ compatibly with the differential $\mathcal{L}^{ch}_{\pi}$. In particular this is the case for symplectic varieties, in which case there is in fact an $\mathcal{N}=4$ action, the operator $H$ above is induced from this structure. \end{remark}

\subsection{An extended example} We will treat in some detail the example of $M=\mathbb{C}^{2}$ equipped with the form $\pi:=x^{2}\partial_{1}\partial_{2}$. This is an analytic local model for \emph{local} Poisson surfaces constructed from the data of a curve $\Sigma$ with a non-vanishing vector field $\xi$, together with a line bundle, $L$, such a datum specifies a $\mathbb{C}^{*}$-equivariant Poisson structure on the total space of $L$. In the example of $M=\mathbb{C}^{2}$ the $\mathbb{C}^{*}$ action corresponds to giving $x^{2}$ weight $1$. Let us first compute the Poisson homology of $(\mathbb{C}^{2},\pi)$. We see immediately that $H^{\pi}_{2}=0$, $H^{\pi}_{0}\cong \mathbb{C}[x^{1}]$. There is an evident map $\mathbb{C}[x^{1}]dx^{1}\rightarrow H^{\pi}_{1}$ which one can prove directly is an isomorphism.

A slicker way to do this is as follows, one notes that the Euler vector field, $\eta$, for the $\mathbb{C}^{*}$ action constructed above, is in the image of the map $\pi :\Omega^{1}_{M}\rightarrow T_{M}$, indeed $\eta=x^{2}\partial^{2}=\pi(dx^{1})$. A simple computation now confirms that we have $$\big[\mathcal{L}_{\pi},\,\wedge dx^{1}\,\big]=\mathcal{L}_{\eta}.$$ Now $\mathcal{L}_{\eta}$ is simply the grading operator on homology, we deduce that all the homology comes from the weight $0$ subspace, and thus the above is proven. \begin{remark} We caution the reader that it is not a general fact that a connected algebraic group acting on the Poisson variety $(M,\pi)$ must act trivially on $H^{\pi}(M)$, indeed one can take $\pi=0$ in which case one is dealing with Hodge cohomology, on which the group can certainly act non-trivially for (say) an affine $M$. By the above argument this is true if the infinitesimal action is given by Hamiltonians of functions on $M$. This is in fact a formal consequence of the calculus structure, we are acting on homology classes by vanishing cohomology classes, see \cite{Bry} (3.4) for some related discussions. \end{remark}

Now one might hope that this argument could be applied to easily compute the chiral Poisson homology of $(\mathbb{C}^{2}, x^{2}\partial_{1}\partial_{2})$. The calculus developed in the previous section readily implies that we have $$\big[\mathcal{L}^{ch}_{\pi},\,(dx^{1})_{(-1)}\,\big]=\mathcal{L}^{\eta}_{0},$$ and of course $\mathcal{L}^{\eta}_{0}$ is still the grading operator on homology. However the weight $0$ subspace is now huge, as the annihilation vectors $y^{2}$ and $\psi^{2}$ are now of negative weight.

This at least cuts down the size of the space we must compute with somewhat, and for example we can now compute the conformal weight $1$ component.
Let us enumerate the $\mathbb{C}^{*}$ weight $0$ variables, they are generated over $\mathbb{C}[x^{1}_{0},\phi^{1}_{0}\,]$ by the vectors \begin{itemize}\item $x^{2}_{0}\,\psi^{2}_{1}$  in cohomological degree $-1$, \item $x^{2}_{0}y^{2}_{1}, \,\phi^{2}_{0}\psi^{2}_{0}, \,x^{1}_{1}, \,y^{1}_{1}$ in cohomological degree $0$. \item $y_{1}^{2}\phi_{0}^{2}, \,\phi^{1}_{1}$ in cohomological degree $1$, \item and vanishing in all other cohomological degrees. \end{itemize}

We can compute by hand the following differentials, \begin{itemize} \item $y^{2}_{1}\phi^{2}_{0}\mapsto y^{1}_{1}, \, \phi^{1}_{1}\mapsto x^{2}_{0}y^{2}_{1}+\phi^{2}_{0}\psi^{2}_{1}$, \item $x^{2}_{0}y_{1}^{2}\mapsto -\psi^{1}_{1},\, \phi^{2}_{0}\psi^{2}_{1}\mapsto \psi^{1}_{1}, \, x^{1}_{1}\mapsto x^{2}_{0}\psi^{2}_{1}, \, y^{1}_{1}\mapsto 0$, \item and vectors of cohomological degree $-1$ map to $0$ for trivial reasons. \end{itemize}

Staring at the above one deduces that there is no cohomology in conformal weight $1$ and it is perhaps thus tempting to conjecture that the same is true in all non-zero conformal weights. This is in fact \emph{false}, there are non-zero classes already in conformal weight $2$. One such is given by the vector $$v:= x^{1}_{1}\psi^{1}_{1}-x^{2}_{1}\psi^{2}_{1},$$ as the reader can confirm. Taking products of the above example one thus sees that there can be an arbitrarily long string of vanishing cohomology groups in conformal weights $1,2,....,N$ before some non-zero classes show up.

\begin{remark} When $M$ is endowed with a $\pi$-compatible volume form then there is a Virasoro element $L$ in (global sections of) $\Theta^{ch}_{M}$ which represents a class in $H^{ch}_{\pi}(M)$, the chiral Poisson cohomology. Now the $\{0\}$ product of this class acting on cohomology (resp. the $0$-Lie derivative acting on homology) give the gradings on cohomology and homology respectively, whence we see that this class is precisely the obstruction to non-zero classes of non-zero conformal weight in the case of Calabi-Yau Poisson varieties. \end{remark}

\end{document}